\newtheorem{thm}{Theorem}
\newtheorem{lem}{Lemma}
\newtheorem{prop}{Proposition}
\title{On moments of gaps between consecutive squarefree numbers}
\author{Tsz Ho Chan}
\begin{document}
\maketitle

\begin{abstract}
Let $s_1, s_2, s_3, \cdots$ be the set of squarefree numbers in ascending order. In this paper, we prove that the following asymptotic on moments of gaps between squarefree numbers
\[
\sum_{s_{k+1} \le x} (s_{k+1} - s_k)^\gamma \sim B(\gamma) x \; \; \mbox{ with some constant} \; \; B(\gamma) > 0
\]
is true for $0 \le \gamma < 3.75$. This improves the previous best range $0 \le \gamma < 3.6875$.

Keywords: Squarefree numbers, gaps, moments. MSC number: 11N25
\end{abstract}

\section{Introduction and Main Result}

A positive integer is {\it squarefree} if it is not divisible by the square of any prime number. For example, $6$ is squarefree while $12$ is not. Thus, the first few squarefree numbers, $s_k$, are
\[
s_1 = 1, \; \; s_2 = 2, \; \; s_3 = 3, \; \; s_4 = 5, \; \; s_5 = 6, \; \; s_6 = 7,  \; \; s_7 = 10, \; \; \cdots .
\]
It is well-known that the set of squarefree numbers has asymptotic density $6 / \pi^2 > 1/2$ which implies that there are infinitely many consecutive squarefree number (i.e. $s_{k+1} - s_{k} = 1$ infinitely often). Furthermore, (as pointed out by Huxley \cite{Hu1}) Mirsky \cite{M} proved that
\begin{equation} \label{Mirsky}
\mathop{\sum_{s_{k+1} \le x}}_{s_{k+1} - s_k = h} 1 = \alpha(h) x + O \Bigl(\frac{h x}{\log x \log \log x} \Bigr) \; \; \mbox{ for  any integer } \; \; 1 \le h < \frac{\log x \log \log \log x}{(\log \log x)^2}
\end{equation}
with some constant $\alpha(h)$ independent of $x$ which satisfies
\begin{equation} \label{alpha}
\log \alpha(h) \le - \frac{5}{4} h \log \log h + O(h)
\end{equation}
by \cite[Lemma 1]{Hu1}. Using this, Erd\H{o}s \cite{E} showed the following moment asymptotic on gaps between successive squarefree numbers:
\begin{equation} \label{moment1}
\sum_{s_{k+1} \le x} (s_{k+1} - s_k)^\gamma \sim B(\gamma) x
\end{equation}
for $0 \le \gamma \le 2$. Here
\begin{equation} \label{Bgamma}
B(\gamma) = \sum_{h = 1}^{\infty} h^\gamma \alpha(h)
\end{equation}
which converges by (\ref{alpha}). Hooley \cite{Ho} improved the range of validity of (\ref{moment1}) to $0 \le \gamma \le 3$. Later, Filaseta \cite{F} and Filaseta and Trifonov \cite{FT2} improved it further to $0 \le \gamma < 29/9 = 3.222...$ and $0 \le \gamma < 43/13 = 3.307...$ respectively by differencing method. The current best result is due to Huxley \cite{Hu1} and \cite{Hu2} who showed that (\ref{moment1}) is true for $0 \le \gamma < 11/3 = 3.666...$ and $0 \le \gamma < 59/16 = 3.6875$ respectively by using geometric considerations and results on number of rational points close to a curve. In this paper, we extend the range further to $0 \le \gamma < 3.75$. In particular, we prove the following theorem which gives (\ref{moment1}) for $0 \le \gamma < 3.75$ readily.
\begin{thm} \label{mainthm}
For $0 \le \gamma < 3.75$,
\[
\sum_{x / 2 < s_{k+1} \le x} (s_{k+1} - s_k)^\gamma \sim \frac{1}{2} B(\gamma) x
\]
with $B(\gamma)$ defined by (\ref{Bgamma}).
\end{thm}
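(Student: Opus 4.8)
The plan is to isolate a main term from the short gaps, via Mirsky's estimate (\ref{Mirsky}), and to show the long gaps contribute negligibly. Fix a parameter $H$, to be sent to infinity only after $x$. Write $A(x,h)$ for the number of $k$ with $x/2<s_{k+1}\le x$ and $s_{k+1}-s_k=h$; subtracting (\ref{Mirsky}) evaluated at $x/2$ from (\ref{Mirsky}) evaluated at $x$ gives $A(x,h)=\tfrac12\alpha(h)x+O\bigl(hx/(\log x\log\log x)\bigr)$ for $h\le H$ and $x$ large, whence
\[
\sum_{h\le H}h^\gamma A(x,h)=\frac{x}{2}\sum_{h\le H}h^\gamma\alpha(h)+O\!\left(\frac{H^{\gamma+2}x}{\log x\log\log x}\right).
\]
By (\ref{alpha}) the series $\sum_{h}h^\gamma\alpha(h)$ converges, with sum $B(\gamma)$, so letting $x\to\infty$ and then $H\to\infty$ this quantity is asymptotic to $\tfrac12 B(\gamma)x$. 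It therefore suffices to prove that for every $\varepsilon>0$ there is an $H_0=H_0(\varepsilon)$ so that, for all $H\ge H_0$ and all large $x$,
\[
\sum_{\substack{x/2<s_{k+1}\le x\\ s_{k+1}-s_k>H}}(s_{k+1}-s_k)^\gamma\le\varepsilon x.
\]

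For the long-gap sum I would decompose dyadically, grouping the gaps with $s_{k+1}-s_k\in(G,2G]$ for $G=2^jH$. Since gaps between squarefree numbers satisfy $s_{k+1}-s_k\ll s_k^{\theta}$ for a fixed $\theta<1$ (the sharpest known value, $\theta=1/5$ of Filaseta and Trifonov, being the most favourable), only $O(\log x)$ such blocks occur, all with $G\ll x^{\theta}$. Let $N(y,m)$ be the number of $n\le y$ for which $(n,n+m]$ contains no squarefree number. A gap $(s_k,s_{k+1})$ of length $g\in(G,2G]$ makes $n+1,\dots,n+\lfloor G/2\rfloor$ all non-squarefree for each of the more than $G/2$ integers $n$ with $s_k\le n\le s_{k+1}-1-\lfloor G/2\rfloor$, and these $n$ are $\le x$ and pairwise disjoint across distinct gaps; hence the number $R(x,G)$ of gaps in $(G,2G]$ with $s_{k+1}\le x$ satisfies $R(x,G)\ll G^{-1}N\bigl(x,\lfloor G/2\rfloor\bigr)$. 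Thus the long-gap sum is $\ll\sum_{G}G^{\gamma-1}N\bigl(x,\lfloor G/2\rfloor\bigr)$, and the theorem is reduced to a strong enough upper bound for $N(x,m)$ in the range $H/2\ll m\ll x^{\theta}$.

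Producing that bound is the substance of the argument and the principal obstacle. For $m$ up to about $\log x/\log\log x$ one may sum (\ref{Mirsky}) against (\ref{alpha}) to see that $N(x,m)$ is at most $x$ times a quantity decaying faster than any power of $m$, so this part of the sum is $\le\varepsilon x$ once $H$ is large; the real work is confined to $\log x\ll m\ll x^{\theta}$. There I would combine the differencing and sieve arguments of Filaseta \cite{F} and Filaseta–Trifonov \cite{FT2} with Huxley's estimates \cite{Hu1,Hu2} for the number of integer points close to a smooth arc, organising the count of squarefree-free intervals so that $\sum_{G\ge H}G^{\gamma-1}N\bigl(x,\lfloor G/2\rfloor\bigr)$ is at most $\varepsilon x$ for $H$ large. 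The precise threshold $\gamma<3.75$ is dictated by balancing the strength of the resulting bound for $N(x,m)$ against the growing weight $m^{\gamma-1}$ at the largest admissible scales $m\asymp x^{\theta}$; it is exactly there that the geometric input must be pushed, and \emph{gaining} the margin over the exponent $59/16$ of \cite{Hu2}, rather than only recovering it, is the crux. Assembling the main term $\tfrac12 B(\gamma)x$ from the short gaps with this $o(x)$ bound for the long gaps and letting $H\to\infty$ establishes the theorem; summing it over the ranges $x,x/2,x/4,\dots$ then yields (\ref{moment1}) for $0\le\gamma<3.75$.
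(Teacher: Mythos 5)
Your set-up is sound, and the reduction is essentially the standard one: isolate the main term from short gaps via Mirsky's formula (\ref{Mirsky}), observe by (\ref{alpha}) that the tail of $\sum h^\gamma\alpha(h)$ is negligible, and reduce the theorem to an $o(x)$ bound for the weighted count of long gaps after a dyadic decomposition. Your reduction of the gap count $R(x,G)$ to the squarefree-free-interval count $N(x,\lfloor G/2\rfloor)$ is also correct. But this much is not new; it is precisely the skeleton shared by Erd\H{o}s, Hooley, Filaseta, Filaseta--Trifonov, and Huxley, and it yields only the previously known ranges of $\gamma$. Everything you write after ``Producing that bound is the substance of the argument and the principal obstacle'' is a promissory note, not an argument.

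The genuine gap is that you never supply the two new ingredients that actually move the threshold from $59/16=3.6875$ to $3.75$, and you do not indicate how you would obtain them. The paper's proof goes through Huxley's Lemma~\ref{simpler}, which is a more structured version of your $N(x,m)$ reduction: it isolates a pointwise count $T(H,P)$ of elements $p^2q$ with $p\asymp P$ inside a window of length $H$, and a sextuple count $S(H,P)$, and asks for one of (\ref{T})--(\ref{S3}) to hold for all dyadic $H,P$ in the relevant ranges. The two improvements are: (i) in Case 1(a), replacing Huxley's earlier close-points-to-a-curve result \cite{Hu2} by the stronger Proposition~\ref{closept} from \cite{Hu4}, applied to $F(u)=\sqrt{j+u}$ after clearing the gcd $v=(u,t)$ and dyadically localising $t'$, which pushes the $S_{1a}$ estimate to $\gamma<3.8$; and (ii) replacing the fourth-derivative bound of \cite{HS1} by the fifth-derivative bound (Proposition~\ref{5deriv} from \cite{HS2}) in the estimate (\ref{Tb}) for $T(H,P)$, producing the three alternative thresholds (\ref{3conditions}) on $P$. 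The final delicate step is a case analysis on $H$: when $P>H^{15/8}/(x^{1/8}\log^2 H)$, one needs $P\gg H^{(2\gamma-4)/3}\log^{7/3}H$ to invoke (\ref{S2bound1}); this holds for $H>x^{3/(77-16\gamma)}\log^7 x$, and when $H$ is below that, one checks that (\ref{bridge}) is still satisfied provided $\gamma<3.75$. That intersection of constraints is exactly where $3.75$ comes from. None of this is in your proposal: invoking only \cite{F}, \cite{FT2}, \cite{Hu1}, \cite{Hu2} would recover $3.6875$, not surpass it, as you yourself note. Until you identify the sharper close-points and derivative lemmas and carry out the case analysis balancing (\ref{bridge}) against (\ref{3conditions}), the proof is not complete.
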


\bigskip

The paper is organized as follows. First, we will review Huxley's work \cite{Hu1}. Then, we will refine the treatment of Case 1(a) by using \cite{Hu4} instead of \cite{Hu2}. Finally, we will prove Theorem \ref{mainthm} using a fifth derivative result in \cite{HS2} in place of a fourth derivative result in \cite{HS1} together with finer analysis on different ranges for $H$. Future improvement relies on better treatment of Case 2 in Huxley's work.

\bigskip

{\bf Notation.} We use $\| x \|$ to denote the distance between $x$ and the nearest integer, and $|A|$ to denote the number of elements in the set $A$. The symbols $f(x) = O(g(x))$, $f(x) \ll g(x)$ and $g(x) \gg f(x)$ are equivalent to $|f(x)| \leq C g(x)$ for some constant $C > 0$. The symbol $f(x) \asymp g(x)$ means that $f(x) \ll g(x) \ll f(x)$. Finally, $f(x) \sim g(x)$ means that $\lim_{x \rightarrow \infty} f(x)/g(x) = 1$.

\section{A summary of Huxley's work}

This section highlights some of the key elements in Huxley's work \cite{Hu1}. With $s_{k+1} - s_k = h + 1$, the set of $h$ consecutive integers
\[
\mathcal{S} := \{ s_k+1, s_k + 2, ..., s_{k+1}-1 \}
\]
are all non-squarefree. Suppose $\gamma \ge 3$. In view of Mirsky's asymptotic formula (\ref{Mirsky}) and the current best bound on gaps between consecutive squarefree numbers \cite{FT1}, we can focus on $\frac{1}{2} H_0 \le h \le H_1$ where
\[
H_0 := \Bigl(\frac{\log x}{\log \log x}\Bigr)^{1/(\gamma + 2)} \; \; \mbox{ and } \; \; H_1 := C_0 x^{1/5} \log x
\]
for some constant $C_0 > 0$ (note that $x$ here is the same as $N$ in \cite{Hu1}). Let $H$ run over powers of $2$ and we basically group the gaps $h$ into dyadic intervals $[H, 2H)$. Define
\begin{equation} \label{P}
P_0(H) := \frac{1}{4} H \log H \; \; \mbox{ and } \; \; P_1(H) := H^\gamma \log H.
\end{equation}
Huxley classified the gaps $s_{k+1} - s_k \ge H+1$ into large-prime gaps and small-prime gaps. Large-prime gap means that the set $\mathcal{S}$ contains a number of the form $p^2 q$ with some prime $p \ge P_1(H)$.
Small-prime gap means each subset of $H$ consecutive integers in $\mathcal{S}$ contains at least $H / 4$ numbers of the form $p^2 q$ with prime $P_0(H) \le p < P_1(H)$. A lemma of Erd\H{o}s shows that we must have either large-prime gaps or small-prime gaps. Huxley proved that the contribution from large-prime gaps longer than $H_0$ contributes
\begin{equation} \label{largegap}
O \Bigl( \frac{x}{\log \log x} \Bigr).
\end{equation}
Let
\[
F(H, n) := | \{n < p^2 q \le n+H : p \mbox{ is prime and } p \ge P_0(H) \}|
\]
and
\[
F(H, P, n) := | \{n < p^2 q \le n+H : p \mbox{ is prime and } P \le p < 2P \}|
\]
where $P$ is a power of two. Then a small-prime gap of length greater than $H$ contains a number $n$ with $F(H, n) \ge H/4$ and we also have
\[
\mathop{\sum\nolimits'}_{P} F(H, P , n) \ge \frac{H}{8}
\]
where $\sum\nolimits'$ denotes a sum over powers of two with
\begin{equation} \label{F}
F(H, P, n) \ge \frac{H}{16 \gamma \log H}
\end{equation}
since there are at most $2 \gamma \log H$ different powers of two in the interval $[P_0(H)/2, P_1(H))$. Following \cite{Ho} and \cite{F}, the set of sextuples
\[
\{ (p_1, p_2, p_3, q_1, q_2, q_3)  : P \le p_1, p_2, p_3 < 2P, \; \;
\frac{1}{2}x \le p_3^2 q_3 < p_2^2 q_2 < p_1^2 q_1 \le \min(x, p_3^2 q_3 + H - 1) \Bigr\}
\]
was introduced. It follows from (\ref{F}) that a gap of length greater than $H$ contains a number $n$ such that there are at least
\[
\frac{H^3}{12 \times 2^{12} \gamma^3 \log^3 H}
\]
such sextuples from $(n, n + H]$. With $D := 2^9 (\gamma \log H)^{3/2}$ and $D' := 2 D^2$, we define
\begin{align*}
S(H, P) := \Big| \Bigl\{(p_1, p_2, p_3, q_1, q_2, q_3)&  : P \le p_1, p_2, p_3 < 2P;  (q_1, q_2, q_3) \le D;  (q_1, q_2), (q_2, q_3), (q_3, q_1) \le D'; \\
&\frac{1}{2}x \le p_3^2 q_3 < p_2^2 q_2 < p_1^2 q_1 \le \min(x, p_3^2 q_3 + H - 1) \Bigr\} \Big|
\end{align*}
which counts sextuples with certain greatest common divisor conditions on $q_i$'s. With
\[
T(H, P) := \max_{x/2 \le n \le x - H} \Big| \left\{ 1 \le i \le H : n + i = p^2 q \mbox{ for some prime } P \le p < 2P \right\} \Big|,
\]
Huxley \cite[Lemma 4]{Hu1} established the following lemma.
\begin{lem} \label{simpler}
For $x$ sufficiently large, we have
\[
\sum_{x / 2 < s_{k+1} \le x} (s_{k+1} - s_k)^\gamma = \frac{1}{2} B(\gamma) x + O \Bigl(\frac{x}{\log \log x} \Bigr)
\]
provided that, for each powers of two $H$ and $P$ in the ranges
\[
\frac{1}{2} \Bigl(\frac{\log x}{\log \log x}\Bigr)^{1/(\gamma + 2)} \le H \le C_0 x^{1/5} \log x \; \; \mbox{ and } \; \; \frac{1}{4} H \log H \le P \le H^\gamma \log H,
\]
we have either 
\begin{equation} \label{T}
T(H, P) < \frac{H}{64 \gamma \log H},
\end{equation}
or a bound for $S(H, P)$ in one of the following forms:
\begin{equation} \label{S1}
S(H, P) = O \Bigl(\frac{x}{H^{\gamma - 3} \log^6 H} \Bigr),
\end{equation}
or for some $\eta > 0$ and some $P' \le P$, which may depend on $H$ but not on $P$,
\begin{equation} \label{S2}
S(H, P) = O \Bigl(\frac{x}{H^{\gamma - 3} \log^5 H} \Bigl( \frac{P'}{P} \Bigr)^\eta \Bigr),
\end{equation}
or for some $\eta > 0$ and some $P' \ge P$, which may depend on $H$ but not on $P$,
\begin{equation} \label{S3}
S(H, P) = O \Bigl(\frac{x}{H^{\gamma - 3} \log^5 H} \Bigl(\frac{P}{P'}\Bigr)^\eta \Bigr).
\end{equation}
The implied constants depend on $\gamma$ and $\eta$.
\end{lem}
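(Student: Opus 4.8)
\medskip

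\noindent\emph{Proof idea.} Since the case $\gamma<3$ of (\ref{moment1}) is classical, I assume $3\le\gamma<3.75$ and keep the notation of the summary above; write $g:=s_{k+1}-s_k$. The plan is to split $\sum_{x/2<s_{k+1}\le x}g^\gamma$ according to the size of $g$. First, the gaps with $g\le\frac12 H_0$ produce the main term: differencing (\ref{Mirsky}) at $x$ and at $x/2$ shows that the number of $k$ with $x/2<s_{k+1}\le x$ and $g=h$ equals $\frac{x}{2}\alpha(h)+O\bigl(\frac{hx}{\log x\log\log x}\bigr)$, and summing $h^\gamma$ against this I get $\frac12 B(\gamma)x$, the error from the $O$-terms being $\ll\frac{x}{\log x\log\log x}\sum_{h\le H_0}h^{\gamma+1}\ll\frac{x}{\log x\log\log x}H_0^{\gamma+2}=\frac{x}{(\log\log x)^2}$ and the cost of completing the tail $\frac{x}{2}\sum_{h>H_0/2}h^\gamma\alpha(h)$ being $o(x/\log\log x)$ by the rapid decay (\ref{alpha}). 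Next, no gap has $g>H_1$ once $x$ is large, by the gap bound \cite{FT1} (which is exactly why $H_1$ is chosen as it is), and the at most one gap with $s_k<x/2\le s_{k+1}$ contributes at most $H_1^\gamma\ll x^{\gamma/5+\varepsilon}=o(x/\log\log x)$, so I discard it. What remains is the contribution of the gaps with $x/2\le s_k$, $s_{k+1}\le x$ and $\frac12 H_0<g\le H_1$, and a routine dyadic/Abel rearrangement shows this is $\ll\sum_{H}H^\gamma\cdot\#\{\text{such gaps with }g>H\}$ over powers of two $H$ with $\frac12 H_0\le H\le H_1$, so it suffices to bound this quantity by $x/\log\log x$. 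By Erd\H{o}s's lemma each such gap is, at scale $H$, large-prime or small-prime; the large-prime ones are absorbed into $O(x/\log\log x)$ by (\ref{largegap}), so I am left with $N(H):=\#\{\text{small-prime-at-scale-}H\text{ gaps with }x/2\le s_k,\,s_{k+1}\le x,\,g>H\}$.

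Now I would run the sextuple argument gap by gap. For such a gap the window $(s_k,s_k+H]$ lies inside $\mathcal{S}$, so $F(H,s_k)\ge H/4$, and as noted just before (\ref{F}) there is a power of two $P$ (depending on the gap) with $F(H,P,s_k)\ge\frac{H}{16\gamma\log H}$. Grouping the gaps by this $P$ gives $N(H)=\sum_P N(H,P)$, the sum over the $\le 2\gamma\log H$ admissible powers of two. Fix $(H,P)$. If (\ref{T}) holds for $(H,P)$, then $N(H,P)=0$, since any gap assigned to $P$ would force $T(H,P)\ge F(H,P,s_k)\ge\frac{H}{16\gamma\log H}>\frac{H}{64\gamma\log H}$, contradicting (\ref{T}). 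Otherwise, for each gap assigned to $P$ the $\ge\frac{H}{16\gamma\log H}$ integers $p^2q\in(s_k,s_k+H]$ with $P\le p<2P$ yield, on choosing three and ordering them by size, at least $\binom{H/(16\gamma\log H)}{3}$ sextuples with $\frac12 x\le p_3^2q_3<p_2^2q_2<p_1^2q_1\le p_3^2q_3+H-1<x$; by the treatment of \cite{Ho} and \cite{F} the common-divisor conditions defining $S(H,P)$ (this is the role of $D$ and $D'$) discard at most half of them, so each gap contributes $\gg H^3/\log^3 H$ sextuples counted by $S(H,P)$. Since distinct small-prime gaps occupy disjoint intervals, these sextuples are pairwise distinct; hence $S(H,P)\gg N(H,P)\,H^3/\log^3 H$, i.e.
\[
N(H,P)\ll\frac{\log^3 H}{H^3}\,S(H,P).
\]

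Finally I would feed in (\ref{S1})--(\ref{S3}) and sum over $P$ and $H$. If (\ref{S1}) holds then $N(H,P)\ll x/(H^\gamma\log^3 H)$, and summing over the $\le 2\gamma\log H$ values of $P$ gives $\ll x/(H^\gamma\log^2 H)$. If (\ref{S2}) holds, with a $P'\le P$ depending only on $H$, then $N(H,P)\ll\frac{x}{H^\gamma\log^2 H}(P'/P)^\eta$ and $\sum_{P\ge P'}(P'/P)^\eta\ll1$ over powers of two, again giving $\ll x/(H^\gamma\log^2 H)$; (\ref{S3}) is symmetric, with $\sum_{P\le P'}(P/P')^\eta\ll1$. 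In every case $N(H)\ll x/(H^\gamma\log^2 H)$, so $H^\gamma N(H)\ll x/\log^2 H$, and since the dyadic $H$ start at $\asymp H_0$ with $\log H_0\asymp\log\log x$,
\[
\sum_{\frac12 H_0\le H\le H_1}H^\gamma N(H)\ll x\sum_{\frac12 H_0\le H\le H_1}\frac{1}{\log^2 H}\ll\frac{x}{\log H_0}\ll\frac{x}{\log\log x}.
\]
Combined with the main term and the large-prime contribution, this gives $\sum_{x/2<s_{k+1}\le x}(s_{k+1}-s_k)^\gamma=\frac12 B(\gamma)x+O(x/\log\log x)$.

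I expect the one genuinely technical step to be the control of sextuples whose $q_i$ have a large common factor, needed so that $S(H,P)$ still captures a positive proportion of the sextuples coming from each gap; this is taken over from Hooley and Filaseta. Apart from that, the main obstacle is the bookkeeping: the exponents $\log^6 H$ in (\ref{S1}) and $\log^5 H$ in (\ref{S2})--(\ref{S3}) are tuned exactly so that, after losing a factor $\log H$ to the grouping over $P$ and a factor $\log^3 H$ in passing from $S(H,P)$ back to a count of gaps, the surviving summand is $x/\log^2 H$, whose sum over the dyadic range is $\ll x/\log H_0\asymp x/\log\log x$ rather than $\asymp x\log x/\log\log x$; keeping this consistent across the three cases for $S(H,P)$, together with the minor boundary adjustments near $x/2$ and near $H_1$, is where the care is needed.
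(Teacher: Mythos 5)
The paper does not prove this lemma itself; it is quoted verbatim as Huxley's \cite[Lemma~4]{Hu1}, and Section~2 only sketches the surrounding framework. Your reconstruction follows precisely that framework (Mirsky for the main term, Filaseta--Trifonov for the cutoff $H_1$, the Erd\H{o}s large-prime/small-prime dichotomy with the large-prime contribution absorbed by (\ref{largegap}), the passage from a small-prime gap to $\gg H^3/\log^3 H$ sextuples counted by $S(H,P)$, and the final summation over $P$ and dyadic $H$), and your bookkeeping is right: the exponents $\log^6 H$, $\log^5 H$, the factor $\le 2\gamma\log H$ of admissible $P$, and the tail $\sum_H 1/\log^2 H \ll 1/\log H_0 \asymp 1/\log\log x$ all fit together as you say, and the $(P'/P)^\eta$, $(P/P')^\eta$ savings with $P'$ independent of $P$ are exactly what makes the geometric sums over $P$ bounded. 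So this is essentially Huxley's argument reassembled from the paper's summary, and I see no substantive gap.

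Two small points worth stating explicitly if you write this up. First, the point $n$ with $F(H,n)\ge H/4$ furnished by the small-prime definition need not be $s_k$, but any $n$ in the gap works since $x/2\le n\le x-H$ still holds, which is what makes the comparison with $T(H,P)=\max_{x/2\le n\le x-H}F(H,P,n)$ legitimate. Second, the step you flag---that imposing $(q_1,q_2,q_3)\le D$ and pairwise $(q_i,q_j)\le D'$ still retains a positive proportion of the $\gg H^3/\log^3 H$ sextuples from each window---is indeed the one nontrivial lemma imported from Hooley/Filaseta, and it is the reason $D,D'$ are taken of size $(\log H)^{3/2}$ and $(\log H)^3$; with those choices the discarded sextuples are at most half, giving the constant $12\cdot 2^{12}$ the paper quotes.
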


\bigskip

Next, $S(H, P)$ was bounded in two stages. Firstly, for fixed primes $P \le p_1, p_2, p_3 < 2P$, he studied the number of integer vectors $\vec{q} = (q_1, q_2, q_3)$ that would give a sextuple $(p_1, p_2, p_3, q_1, q_2, q_3)$ counted in $S(H, P)$. Secondly, for $K$ a power of two, he considered $S(H, P, K)$, the number of triplets of primes $p_1, p_2, p_3$ with the number of such vectors $\vec{q}$ lying in the range $K$ to $2K - 1$. Note that
\[
S(H, P) \le \sum_{K} 2K \; S(H, P, K)
\]
where $K$ is over powers of two. Then, Huxley broke it up
\[
S(H, P, K) = S_1(H, P, K) + S_2(H, P, K) + S_3(H, P, K)
\]
into three cases based on some geometric considerations. It was shown that
\begin{equation} \label{S3bound}
\sum_{K} K S_3(H, P ,K) \ll \frac{H^2 x}{P^3 \log^3 P} \; \; \mbox{ which satisfies (\ref{S2}) with $\eta = 3$ for $\gamma < 4$},
\end{equation}
\begin{equation} \label{S2bound1}
\sum_{K} K S_2(H, P, K) \; \; \mbox{ satisfies (\ref{S2}) with $\eta = \frac{3}{2}$ for }  \; P \gg H^{(2 \gamma - 4)/3} \log^{7/3} H,
\end{equation}
and
\begin{equation} \label{S2bound2}
\sum_{K} K S_2(H, P, K) \; \; \mbox{ satisfies (\ref{S3}) with $\eta = \frac{1}{2}$ for } \; P \ll \frac{x}{H^{2 \gamma - 3} \log^5 H}.
\end{equation}
The bottleneck comes from Case 1 which consists of vectors $\vec{q}$ that are multiples of some primitive vector $\vec{r} = (r_1, r_2, r_3)$. It was shown that
\[
S_1(H, P, K) \le \frac{3H}{K} S'(H, P, K)
\]
where $S'(H, P, K)$ is the number of ordered sets of integers $p_1, p_2, r_1, r_2$ with $P \le p_1, p_2 < 2P$ distinct,
\[
\frac{x}{16 K} \le p_1^2 r_1, p_2^2 r_2 \le \frac{x}{K} \; \; \mbox{ and } \; \; 1 \le p_1^2 r_1 - p_2^2 r_2 \le \frac{H}{K}.
\]
By dropping one dimension, Case 1 was further subdivided into Case 1(a) and Case 1(b) depending on a parameter $L$ which counts the number of different integer vectors $(r_1, r_2)$. Hence, with slightly different notation from \cite{Hu1},
\begin{align*}
S'(H, P, K) =& S'_a(H, P, K) + S'_b(H, P, K), \\ 
S_1(H, P, K) =& S_{1a}(H, P, K) + S_{1b}(H, P, K), \\
S_1(H, P) =& S_{1a}(H, P) + S_{1b}(H,P).
\end{align*}
by breaking into (a) and (b) subcases (Note: Huxley used the notation $S_1'(H,P,K)$, $S_2'(H,P,K)$, $S_{11}(H,P)$, $S_{12}(H,P)$ instead of $S_a'(H,P,K)$, $S_b'(H,P,K)$, $S_{1a}(H,P)$, $S_{1b}(H,P)$ here). Also, for $L$ a power of $2$, one has
\[
S'_a(H, P, K) \le \sum_{L} 2L \; S'_a(H, P, K, L) \; \; \mbox{ and } \; \; S'_b(H, P, K) \le \sum_{L} 2L \; S'_b(H, P, K, L)
\]
where $S'_a(H, P, K, L)$ and $S'_b(H, P, K, L)$ are the number of pairs of primes in Case 1(a) or Cae 1(b) respecitvely for which there are between $L$ and $2L - 1$ vectors $(r_1, r_2)$. From the definition of $D$ and $D'$, we have
\[
K \ll \log^{3/2} H \; \; \mbox{ and } \; \; L \ll \log^3 H
\]
in Case 1 by the construction of $S(H,P)$ in \cite{Hu1}. It was shown in \cite[page 200]{Hu1} that
\begin{equation} \label{S1bbound}
\sum_{K} K S_{1b}(H, P ,K) \ll \frac{H^{6/5} x^{3/5}}{\log^{12/5} P} \mbox{ which satisfies (\ref{S1}) for } \; \gamma < 3.8.
\end{equation}

\section{Improvement on Case 1(a)}

Case 1(a) amounts to bounding the number of quadruples $(p_1, p_2, t, u)$ satisfying
\begin{equation} \label{1a}
1 \le |p_1^2 t - p_2^2 u| \le \frac{H}{K L} \; \; \mbox{ with } 1 \le t, u \le \frac{\sqrt{2} x}{K L P^2} \mbox{ and } P \le p_1, p_2 < 2P.
\end{equation}
In \cite{Hu1}, a lemma based on Dirichlet interchange was used to obtain the range $\gamma < 11/3 = 3.666...$. Later in \cite{Hu2}, a theorem on rational points close to a curve was proved to obtain the slightly better range $\gamma < 59/16 = 3.6875$. Here we shall apply a result in \cite{Hu4}.
\begin{prop} \label{closept}
Given real numbers $\lambda > 0$, $C \ge 3/2$, $M \ge 2$ and $Q \ge 2$. Suppose $F(x)$ is a real-valued function $2d+2$ times continuously differentiable on the interval $[0, 1]$ with
\[
|F^{(r)} (x)| \le \lambda C^{r+1}
\]
for $r = 0, 1, 2, ..., 2d+2$,
\[
|D_{r,s}(F(x))| \ge \Bigl( \frac{\lambda}{C^{r+1}} \Bigr)^{s}
\]
for $r = s = d$ and $r = d+1$, $s= 1, 2, ..., d+1$ where
\[
D_{r,s}(F(x)) = \det \Bigl( \frac{F^{(r+i-j)}(x)}{(r + i - j)!} \Bigr)_{s \times s}.
\]
Let $S$ be the set of rational points $(m/n, r/q)$ with $0 \le m \le n$, $1 \le n \le M$, $1 \le q \le Q$, $(m, n) = 1 = (r, q)$ that satisfies $|F(m/n) - r/q| \le \delta$. Let $T = \lambda Q^2$ and $\Delta = \delta Q^2$ with $\Delta < 1/2$ and $T \ge 4$. Then
\[
|S| \ll \bigl( ((1 + \Delta^{1/d} M^2) M^{2d} T)^{1/(2d+1)} + \Delta^{1/(2d+1)} M^2 \bigr) (M T)^\epsilon + (\Delta^{d^2+2d-1} T^{d(d-1)})^{1/(d(d+1)(2d-1))} M^2
\]
for any $\epsilon > 0$. The implied constant may depend on $C$ and $\epsilon$. In the special case of $d = 1$ and $\lambda =1$, one has
\begin{equation} \label{Hux4S}
|S| \ll \bigl( (M Q)^{2/3} + \delta^{1/3} (M Q)^{4/3} \bigr) \log M Q.
\end{equation}
\end{prop}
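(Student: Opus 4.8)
The plan is to prove Proposition~\ref{closept} by the auxiliary-polynomial (determinant) method of Bombieri--Pila and Huxley, in the form adapted to rational points with two independent denominators. First I would dissect $[0,1]$ into $O(1/\ell)$ subintervals of a common length $\ell$, a parameter to be optimised at the end, and estimate the number of points of $S$ lying above a single such arc $I$; summing over the $O(1/\ell)$ arcs then yields the global count. After an affine change of variables carrying $I$ to $[0,1]$, the derivative bounds $|F^{(r)}(x)| \le \lambda C^{r+1}$ and the determinant conditions $|D_{r,s}(F(x))| \ge (\lambda/C^{r+1})^{s}$ rescale in a controlled way: the latter, for $r = s = d$ and for $r = d+1$, $s = 1, \dots, d+1$, are precisely the non-degeneracy hypotheses ensuring that $F$ on $I$ is genuinely far from every polynomial of degree $\le d$ and, simultaneously, that the curve $y = F(x)$ meets any fixed algebraic curve of degree $\le d$ in only a controlled way.

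The heart of the argument is a dichotomy on each arc. If the arc above $I$ carries more than $N = \binom{d+2}{2}$ points of $S$, I would build an auxiliary polynomial $A(x,y) = \sum_{a+b\le d} c_{a,b}\, x^{a} y^{b}$ with integer coefficients, not all zero, for which $A(m/n, r/q) = 0$ at each of these points. This is possible because the relevant determinant --- essentially $D_{d,d}(F)$ after rescaling --- being small forces a near-linear dependence among the vectors $\bigl((m/n)^{a}(r/q)^{b}\bigr)_{a+b\le d}$; clearing denominators $n^{d}q^{d} \le (MQ)^{d}$ turns the resulting small real quantity into a bounded integer, hence into an exact vanishing once $\ell$ and $\delta$ are chosen so that this quantity is below $1$. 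A further counting shows that only $O((MT)^{\epsilon})$ distinct such auxiliary curves $A = 0$ are ever needed across all arcs --- this is the source of the $(MT)^{\epsilon}$ factors in the statement.

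It then remains to bound, for each such fixed $A$ of degree $\le d$, the number of rational points of $S$ on the intersection of $A(x,y) = 0$ with $y = F(x)$. Here I would use the non-degeneracy condition at level $r = d+1$: if a short arc of $A(x, F(x)) = 0$ carried too many rational points with denominators $\le M$ and $\le Q$, the Wronskian-type determinants $D_{d+1,s}(F)$ would be pushed below $(\lambda/C^{d+1})^{s}$, contradicting the hypothesis. Combining the per-curve bound with the bound on the number of curves, and then optimising $\ell$ against $\delta$, $M$, $Q$ and substituting $T = \lambda Q^{2}$, $\Delta = \delta Q^{2}$, produces the three contributions displayed: the term $\bigl((1 + \Delta^{1/d} M^{2}) M^{2d} T\bigr)^{1/(2d+1)}$ from the determinant balance on the arcs, $\Delta^{1/(2d+1)} M^{2}$ from points that are isolated on their arc, and $\bigl(\Delta^{d^{2}+2d-1} T^{d(d-1)}\bigr)^{1/(d(d+1)(2d-1))} M^{2}$ from the higher-degree resolution. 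In the special case $d = 1$, $\lambda = 1$, one has $N = 3$ and the auxiliary polynomial has degree $1$: the whole argument reduces to the classical estimate that a strictly convex arc (here $|F'|, |F''| \gg 1$ after normalisation) contains few rational points of bounded height, namely $|S| \ll \bigl((MQ)^{2/3} + \delta^{1/3}(MQ)^{4/3}\bigr)\log MQ$ --- the Swinnerton-Dyer/Bombieri--Pila bound.

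The step I expect to be the main obstacle is the auxiliary-polynomial construction together with the bookkeeping of exponents: one must simultaneously track the rescaling of all $2d+2$ derivative bounds and of each determinant condition on an arc of length $\ell$, verify that the constructed polynomial is genuinely nonzero (this is exactly where $|D_{d,d}(F)| \ge (\lambda/C^{d+1})^{d}$ is indispensable), and then choose $\ell$ so that the ``isolated-point'', ``low-degree-curve'', and ``resolution'' contributions balance to give the stated exponents. A secondary technical hurdle is adapting Huxley's usual framework of integer points near a curve to rational points $(m/n, r/q)$ with two unrelated denominators $n \le M$, $q \le Q$, which is what dictates the clearing-of-denominators step and the normalisation $T = \lambda Q^{2}$, $\Delta = \delta Q^{2}$.
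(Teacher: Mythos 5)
The paper does not prove Proposition~\ref{closept}: it is quoted verbatim as a black-box result from Huxley's \emph{The rational points close to a curve IV} \cite{Hu4}, with the special case (\ref{Hux4S}) read off by setting $d=1$, $\lambda=1$. There is therefore no ``paper's own proof'' to compare against; the correct move in this paper is simply the citation.

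That said, your strategy sketch is in the right spirit. Huxley's argument in that series is indeed the Bombieri--Pila/Swinnerton-Dyer determinant method adapted to rational points with two independent denominators $n\le M$, $q\le Q$: partition the domain into short arcs, use the smallness of the interpolation determinant (Vandermonde-type, with denominators cleared by $n^d q^d$) to force $\binom{d+2}{2}$ or more points on one arc to lie on a common low-degree auxiliary curve, bound the number of distinct auxiliary curves, and bound the points per curve via the higher-order non-degeneracy conditions $|D_{d+1,s}(F)|\ge(\lambda/C^{d+2})^{s}$. The $d=1$ case reduces, as you say, to the classical convex-arc estimate. Where your sketch is unverifiable as written is precisely where you flag it yourself: the rescaling of the $2d+2$ derivative and determinant bounds to an arc of length $\ell$, the choice of $\ell$ balancing the ``many points on one curve'' and ``few points per curve'' contributions, and the derivation of the exact exponents $1/(2d+1)$, $1/d$, and $(d^2+2d-1)/(d(d+1)(2d-1))$ appearing in the displayed bound. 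These are exactly the nontrivial computations in \cite{Hu4}, and without them the sketch cannot be checked against the stated exponents. For the purposes of this paper you should not attempt to reprove the proposition; cite \cite{Hu4} and, if you want a self-contained account, only verify the specialization to $d=1$, $\lambda=1$ (i.e.\ that $T=Q^2$, $\Delta=\delta Q^2$ turn the general bound into $(MQ)^{2/3}+\delta^{1/3}(MQ)^{4/3}$ up to the logarithm), which is a short algebraic check rather than a proof.
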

We also need a simple inequality.
\begin{lem} \label{lem-min}
For any non-negative real numbers $a$, $b$ and $c$,
\begin{equation} \label{mq}
\min(a, b+c) \le \min(a, b) + \min(a, c).
\end{equation}
\end{lem}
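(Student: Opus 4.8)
The plan is to prove inequality (\ref{mq}) by a short case analysis according to which argument realizes each of the two minima on the right-hand side. First I would dispose of the cases in which one of the minima equals $a$: if $\min(a,b) = a$, then since $\min(a,c) \ge 0$ by the non-negativity of $a$ and $c$, we get
\[
\min(a,b) + \min(a,c) \ge a \ge \min(a, b+c),
\]
and the case $\min(a,c) = a$ is handled symmetrically. In the only remaining situation we must have $\min(a,b) = b$ and $\min(a,c) = c$, whence
\[
\min(a,b) + \min(a,c) = b + c \ge \min(a, b+c).
\]
Since these cases are exhaustive, (\ref{mq}) follows in all instances.

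I do not anticipate any genuine obstacle here: the statement is an elementary inequality, recorded only because it will later be used to split a minimum arising in the estimation of $S(H,P)$ across two sub-ranges of a parameter (for instance when combining a bound valid for small $P$ with one valid for large $P$). The single point that requires a little care is to treat correctly the boundary cases in which $a$ itself achieves one or both minima, which is why I separate them out first; everything else is immediate from $a,b,c \ge 0$ and the definition of $\min$.
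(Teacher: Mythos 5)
Your proof is correct, and it is a genuinely different argument from the paper's. You proceed by exhaustive case analysis on which argument realizes each minimum on the right-hand side, handling separately the cases where $a$ itself is one of the minima. The paper instead uses the distributive identity $\min(a,b) + \min(a,c) = \min(a+a,\, a+c,\, b+a,\, b+c)$, rewrites this as $\min\bigl(a + \min(a,b,c),\, b+c\bigr)$, and then drops the nonnegative term $\min(a,b,c)$ to conclude $\ge \min(a, b+c)$; this is a one-line computation that avoids any branching. Your approach buys transparency and requires no algebraic manipulation of nested minima; the paper's buys brevity and a clean identity that some readers may find more illuminating. Either is perfectly adequate for a lemma of this triviality, and both rely on the same essential fact, namely that $a, b, c \ge 0$.
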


\begin{proof}
One simply observes that
\[
\min(a,b) + \min(a,c) = \min(a+a, b+a, a+c, b+c) = \min(a + \min(a,b,c), b+c) \ge \min(a, b+c)
\]
as $a, b, c$ are all non-negative.
\end{proof}

\bigskip

Without loss of generality, we may assume that $p_1 > p_2$ in (\ref{1a}). Hence, it suffices to consider
\[
0 < \Big| \frac{p_1^2}{p_2^2} - \frac{u'}{t'} \Big| \le \frac{H}{K L P^2 t' v} \; \; \mbox{ with } 1 \le t' \le u' \le \frac{\sqrt{2} x}{K L P^2 v}, (u', t') = 1, \mbox{ and } P \le p_2 < p_1 < 2P
\]
after canceling the greatest common divisor $v = (u,t)$. The above is included in
\begin{equation} \label{2dim}
0 < \Big| \sqrt{\frac{u'}{t'}} - \frac{p_1}{p_2} \Big| \le \frac{H}{K L P^2 \sqrt{u' t'} v} \; \; \mbox{ with } 1 \le t' \le u' \le \frac{\sqrt{2} x}{K L P^2 v}, (u', t') = 1, \mbox{ and } P \le p_2 < p_1 < 2P.
\end{equation}
We divide $t'$ into dyadic intervals $[T', 2T')$. Note that as $1 < \frac{p_1}{p_2} \le \frac{2P - 1}{P}$ and $\frac{H}{K L P^2 \sqrt{u' t'} v} \le \frac{4}{P \log H}$, we have $T' \le t' \le u' \le 4 t'$ and $T' \le \sqrt{u' t'} \le 4 T'$. We are going to apply (\ref{Hux4S}) to
\[
F(u) = \left\{ \begin{array}{l} \sqrt{1 + u} \\ \sqrt{2 + u} \\ \sqrt{3 + u} \end{array} \right. \mbox{ with } \lambda = 1, \; \; C= 100, \; \; Q = 2 P, \; \; M = 2T' \le \frac{\sqrt{2} x}{K L P^2 v}, \; \; \delta = \frac{H}{K L P^2 T' v}
\]
depending on whether $0 \le u = \frac{u' - t'}{t'} \le 1$, or $1 \le u = \frac{u' - t'}{t'} \le 2$, or $2 \le u = \frac{u' - t'}{t'} \le 3$. One can check that the derivative conditions in Proposition \ref{closept} are satisfied. The condition $\Delta < 1/2$ is satisfied when
\[
T' > \frac{8 H}{K L v}.
\]
Under this condition, (\ref{Hux4S}) gives an upper bound
\[
O \Bigl( \Bigl(T'^{2/3} P^{2/3} + \frac{H^{1/3} T' P^{2/3}}{K^{1/3} L^{1/3} v^{1/3}} \Bigr) \log x \Bigr)
\]
for the number of rational number solutions to (\ref{2dim}). This, in turn, gives
\begin{equation} \label{Hbd1}
S_a'(H, P, K , L) \ll \Bigl(\frac{x}{K L P^{4/3}} + \frac{H^{1/3} x}{K^{4/3} L^{4/3} P^{4/3}} \Bigr) \log x
\end{equation}
after summing over dyadic intervals $[T', 2T')$ and $v$. When $T' \le \frac{8 H}{K L v}$, one simply applies Lemma 7 in \cite{Hu1} to get the bound
\begin{equation} \label{Hbd2}
S_a'(H, P, K , L) \ll \frac{H^2 \log x}{K^2 L^2}.
\end{equation}
Combining (\ref{Hbd1}) and (\ref{Hbd2}), we have
\[
S_a'(H, P, K , L) \ll \frac{H^2 \log x}{K^2 L^2} + \frac{x \log x}{K L P^{4/3}} + \frac{H^{1/3} x \log x}{K^{4/3} L^{4/3} P^{4/3}}.
\]
Then, following page 201 in \cite{Hu1},
\begin{align*}
S_{1a}(H, P, K) \ll& \frac{H}{K} S'_a (H, P, K) \ll \frac{H}{K} \sum_{L} L \; S_a'(H,P,K,L) \\
\ll& \frac{H}{K} \sum_{L} \min \Bigl( \frac{L P^2}{\log^2 P}, \frac{H^2 \log x}{K^2 L^2} + \frac{x \log x}{K L P^{4/3}} + \frac{H^{1/3} x \log x}{K^{4/3} L^{4/3} P^{4/3}} \Bigr) \\
\ll& \frac{H}{K} \sum_{L} \Bigl[ \frac{H^2 \log x}{K^2 L^2} + \min \Bigl(\frac{L P^2}{\log^2 P}, \frac{x \log x}{K L P^{4/3}} + \frac{H^{1/3} x \log x}{K^{4/3} L^{4/3} P^{4/3}} \Bigr) \Bigr] \\
\ll& \frac{H^3 \log x}{K^3} + \frac{H}{K} \min \Bigl(P^2 \log P, \frac{x \log x}{K P^{4/3}} + \frac{H^{1/3} x \log x}{K^{4/3} P^{4/3}} \Bigr)
\end{align*}
by Lemma \ref{lem-min} and $L \ll \log^3 H$. By Lemma \ref{lem-min},  $\min(a,b) \le a^{\alpha} b^{1 - \alpha}$ with $\alpha = 2/5$ and $K \ll \log^{3/2} H$, we have
\begin{align*}
S_{1a}(H, P) \ll& \sum_{K} K \; S_{1a}(H, P, K) \\
\ll& H^3 \log x + H \sum_{K} \min \Bigl(P^2 \log P, \frac{x \log x}{K P^{4/3}} \Bigr) + H \sum_{K} \min \Bigl(P^2 \log P, \frac{H^{1/3} x \log x}{K^{4/3} P^{4/3}} \Bigr) \\
\ll& H^3 \log x + H \min  \Bigl(P^2 \log^{5/2} P, \frac{x \log x}{P^{4/3}} \Bigr) + H \min \Bigl(P^2 \log^{5/2} P, \frac{H^{1/3} x \log x}{P^{4/3}} \Bigr) \\
\ll& H^3 \log x + H x^{3/5} \log^{8/5} x + H^{6/5} x^{3/5} \log^{8/5} x \ll H^{6/5} x^{3/5} \log^{8/5} x
\end{align*}
as $H \ll x^{1/5} \log x$. This implies
\begin{equation} \label{S1abound}
\sum_{K} K S_{1a}(H, P ,K) \; \mbox{ satisfies (\ref{S1}) for } \; \gamma < 3.8.
\end{equation}

\section{Proof of Theorem \ref{mainthm}}

Instead of a result of Huxley and Sargos \cite{HS1} as used in \cite{Hu1}, we use its improvement \cite[Theorem 5]{HS2}.
\begin{prop} \label{5deriv}
Let $0 < \delta \le 1/4$ and $M \ge 4$. Suppose $f : [M, 2M] \rightarrow \mathbb{R}$ has $r$ continuous derivatives with $|f^{(r)}(u)| \asymp \lambda_r$ for $M \le u \le 2 M$ with some real numbers $\lambda_r > 0$. Define
\[
\mathcal{R}(f, \delta) = | \{ m \in [M, 2M] \cap \mathbb{Z} : \| f(m) \| \le \delta \} |.
\]
Then, for $r \ge 5$,
\[
\mathcal{R}(f, \delta) \ll M \lambda_r^{\frac{2}{r (r+1)}} + M \delta^{\frac{2}{(r - 1)(r - 2)}} + \Bigl( \frac{\delta}{\lambda_{r - 1}} \Bigr)^{\frac{1}{r - 1}} + 1.
\]
\end{prop}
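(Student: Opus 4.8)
\medskip\noindent\emph{Proposed proof of Proposition~\ref{5deriv}.} Write $R=\mathcal R(f,\delta)$. Since $\delta\le 1/4$, every integer $m\in[M,2M]$ with $\|f(m)\|\le\delta$ yields a unique lattice point $(m,n_m)$, where $n_m$ is the integer nearest $f(m)$, lying in the thin strip $\{\,|y-f(x)|\le\delta\,\}$ above $[M,2M]$; so it suffices to count such lattice points near the curve $y=f(x)$. The plan is the interpolation-determinant (divided difference) method: partition $[M,2M]$ into $\asymp M/H$ arcs of length $H$, choose $H$ as large as possible subject to the lattice points on each arc being forced onto a single auxiliary polynomial curve of low degree, then add the number of arcs to a recursive bound for the lattice points on each such curve.

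The mechanism is as follows. If an arc of length $H$ carries $k$ of our points $(m_i,n_i)$, form the $k\times k$ determinant $\Delta$ with $i$th row $(1,m_i,m_i^2,\dots,m_i^{k-2},n_i)$; then $\Delta\in\mathbb Z$. Replacing each $n_i$ by $f(m_i)$ turns this determinant into $V\cdot[m_1,\dots,m_k]f$, a Vandermonde $V$ times the divided difference of order $k-1$, which is $\ll H^{k(k-1)/2}\lambda_{k-1}$ (using $|f^{(k-1)}|\asymp\lambda_{k-1}$ and $|V|\le H^{\binom k2}$); and the correction produced by $|n_i-f(m_i)|\le\delta$ is $\ll\delta H^{(k-1)(k-2)/2}$. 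Hence, taking $H$ a small multiple of $\min\!\bigl(\lambda_{k-1}^{-2/(k(k-1))},\,\delta^{-2/((k-1)(k-2))}\bigr)$ forces $|\Delta|<1$, so $\Delta=0$ and all lattice points on that arc lie on one curve $y=P(x)$ with $\deg P\le k-2$. Running this with $k=r+1$ gives arcs of length $\asymp\lambda_r^{-2/(r(r+1))}$, and the arcs carrying fewer than $r+1$ points contribute $\ll M\lambda_r^{2/(r(r+1))}$; the companion run with $k=r$ supplies the term $\ll M\delta^{2/((r-1)(r-2))}$ in the regime where $\delta$ sets the smaller threshold. The remaining arcs are degenerate.

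It remains to bound the strip lattice points that sit on an auxiliary curve $y=P(x)$ of degree $<r$. Since $P$ interpolates genuine lattice points, it has rational coefficients of bounded denominator, so requiring $(m,P(m))$ to be one of our points imposes $P(m)\in\mathbb Z$ — a union of congruences on $m$ — together with $|f(m)-P(m)|\le\delta$; as $f-P$ still has non-vanishing top derivative $\asymp\lambda_r$, this is a lower-complexity instance of the same problem, to be absorbed into an induction on $r$, while the near-monotone boundary case — where $f$ threads a single integer-neighbourhood over a long stretch — yields the residual $(\delta/\lambda_{r-1})^{1/(r-1)}+1$ by the first-derivative test. The main obstacle, and the source of the hypothesis $r\ge 5$, is ensuring that, after all the free parameters are optimised, none of the degenerate and recursive contributions exceeds $M\lambda_r^{2/(r(r+1))}$: the induction must bottom out at $r\le 4$, where the needed estimates are precisely the classical van der Corput $AB$-process bounds and the fourth-derivative result of Huxley and Sargos \cite{HS1}, and one must check that each recursion step is lossless. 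I expect the auxiliary-curve bookkeeping, rather than the determinant estimate itself, to be where the real work lies.
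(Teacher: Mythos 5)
The paper does not prove Proposition~\ref{5deriv} at all: it is quoted verbatim, as a black box, from Huxley and Sargos \cite[Theorem~5]{HS2}. There is therefore no ``paper's own proof'' to compare your sketch against; the only thing the paper does with this statement is apply it with $r=5$ to $f_n(u)=n/u^2$ in the proof of Theorem~\ref{mainthm}.

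That said, your reconstruction of the likely proof is pointed in the right direction. The interpolation-determinant (divided-difference) method is indeed the engine of the Huxley--Sargos series: the $k\times k$ determinant with rows $(1,m_i,\dots,m_i^{k-2},n_i)$, its decomposition into a Vandermonde times a divided difference of $f$ plus a $\delta$-error, the thresholds $H\ll\lambda_{k-1}^{-2/(k(k-1))}$ and $H\ll\delta^{-2/((k-1)(k-2))}$ that force $\Delta=0$, the count $\ll M/H$ of short arcs giving the first two terms, and the first-derivative bound $(\delta/\lambda_{r-1})^{1/(r-1)}+1$ for long monotone stretches are all recognizable ingredients. The genuine gap is exactly where you say it is. Once an arc is ``major'' --- i.e.\ its strip lattice points lie on a single polynomial curve $y=P(x)$ of degree $<r$ --- you reduce to counting integers $m$ with $P(m)\in\mathbb Z$ and $|f(m)-P(m)|\le\delta$, and you assert this is a ``lower-complexity instance of the same problem, to be absorbed into an induction on $r$.'' But you do not set up the induction, you do not control the denominators of $P$ and the ensuing congruence conditions on $m$, and you do not verify that the recursion terminates with losses that stay inside $M\lambda_r^{2/(r(r+1))}$; you explicitly flag ``one must check that each recursion step is lossless'' and ``the auxiliary-curve bookkeeping \dots is where the real work lies.'' In Huxley--Sargos that bookkeeping \emph{is} the theorem --- it is precisely what was sharpened between \cite{HS1} and \cite{HS2}, and it is not a straightforward induction on $r$ bottoming out at van der Corput. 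There is also a hypothesis mismatch you gloss over: the statement assumes only $|f^{(r)}|\asymp\lambda_r$, yet both your ``companion run with $k=r$'' and the term $(\delta/\lambda_{r-1})^{1/(r-1)}$ require control of $f^{(r-1)}$, which must be extracted (e.g.\ by integrating $f^{(r)}$) rather than assumed. As a proof the proposal is therefore incomplete; as a reading guide to \cite{HS2} it is serviceable, but the paper itself neither proves nor needs to prove this result.
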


\begin{proof}[Proof of Theorem \ref{mainthm}]
Suppose $3 \le \gamma < 3.8$. Then, (\ref{S3bound}), (\ref{S1bbound}) and (\ref{S1abound}) imply that the contributions from $S_1(H,P,K)$ and $S_3(H,P,K)$ towards $S(H,P)$ satisfy (\ref{S1}) or (\ref{S2}). From (\ref{S2bound1}) and (\ref{S2bound2}), the contribution from $S_2(H,P,K)$ towards $S(H,P)$ satisfies (\ref{S2}) or (\ref{S3}) when
\begin{equation} \label{bridge}
H \le x^{\frac{3}{8 \gamma - 13}} / \log^{22} x 
\end{equation}
as the two intervals $P \gg H^{(2\gamma - 4)/3} \log^{7/3} H$ and $P \ll \frac{x}{H^{2\gamma-3} \log^5 H}$ would overlap.

Now, we try to make (\ref{T}) to hold. For $x/2 \le n \le x$, consider the function $f_n(u) = n / u^2$ with $P \le u < 2P$. With $M = P$ and $\delta = H / P^2$, we have
\begin{equation} \label{Tb}
T(H, P) \ll \max_{x/2 \le n \le x} \mathcal{R}(f_n, \delta) \ll x^{1/15} P^{8/15} + H^{1/6} P^{2/3} + \frac{H^{1/4} P}{x^{1/4}}
\end{equation}
by Proposition \ref{5deriv} with $r = 5$. One can check that the above first upper bound is $< \frac{H}{192 \gamma \log H}$ when $P \le \frac{H^{15/8}}{x^{1/8} \log^2 H}$, the second upper bound is $< \frac{H}{192 \gamma \log H}$ when $P \le \frac{H^{5/4}}{\log^2 H}$, and the third upper bound is $< \frac{H}{192 \gamma \log H}$ when $P \le \frac{H^{3/4} x^{1/4}}{\log^2 H}$. Hence, (\ref{T}) holds unless
\begin{equation} \label{3conditions}
P > \frac{H^{15/8}}{x^{1/8} \log^2 H}, \; \; \frac{H^{5/4}}{\log^2 H} \; \; \mbox{ or } \; \; \frac{H^{3/4} x^{1/4}}{\log^2 H}.
\end{equation}

If $P > \frac{H^{5/4}}{\log^2 H}$, then $P \gg H^{(2 \gamma - 4)/3} \log^{7/3} H$ for $\gamma < 3.875$. This would imply $S(H,P)$ satisfies (\ref{S1}), (\ref{S2}) or (\ref{S3}) by (\ref{S3bound}), (\ref{S1bbound}), (\ref{S1abound}) and (\ref{S2bound1}).

\bigskip

If $P > \frac{H^{3/4} x^{1/4}}{\log^2 H}$, then $P \gg H^{(2 \gamma - 4)/3} \log^{7/3} H$ for $\gamma < 5$. This would imply $S(H,P)$ satisfies (\ref{S1}), (\ref{S2}) or (\ref{S3}) by (\ref{S3bound}), (\ref{S1bbound}), (\ref{S1abound}) and (\ref{S2bound1}).

\bigskip

Finally, if $P > \frac{H^{15/8}}{x^{1/8} \log^2 H}$, then $P \gg H^{(2 \gamma - 4)/3} \log^{7/3} H$ when $H > x^{\frac{3}{77 - 16 \gamma}} \log^7 x$ as $\gamma < 3.8$. So, when $H > x^{\frac{3}{77 - 16 \gamma}} \log^7 x$, $S(H,P)$ satisfies (\ref{S1}), (\ref{S2}) or (\ref{S3}) by (\ref{S3bound}), (\ref{S1bbound}), (\ref{S1abound}) and (\ref{S2bound1}). When $H \le x^{\frac{3}{77 - 16 \gamma}} \log^7 x$, $H$ satisfies (\ref{bridge}) as long as $\gamma < 3.75$. Thus, when $\gamma < 3.75$, $S(H, P)$ satisfies (\ref{S1}), (\ref{S2}) or (\ref{S3}) regardless the size of $H$.

\bigskip

Therefore, when $3 \le \gamma < 3.75$, one of the conditions in Lemma \ref{simpler} is satisfied and we have Theorem \ref{mainthm}.
\end{proof}

Mathematics Department \\
Kennesaw State University \\
Marietta, GA 30060 \\
tchan4@kennesaw.edu


\begin{thebibliography}{99}


\bibitem{E} P. Erd\H{o}s, Some problems and results in elementary number theory, {\it Publ. Math. Debrecen} {\bf 2} (1951), 103--109.

\bibitem{F} M. Filaseta, On the distribution of gaps between square-free numbers, {\it Mathematika} {\bf 40} (1993), 87--100.

\bibitem{FT1} M. Filaseta and O. Trifonov, On gaps between squarefree numbers II, {\it J. London Math. Soc.} (2) {\bf 45} (1992), 323--333.

\bibitem{FT2} M. Filaseta and O. Trifonov, The distribution of fractional parts with applications to gap results in number theory, {\it Proc. London Math. Soc.} (2) {\bf 3} (1996), 241--278.

\bibitem{Ho} C. Hooley, On the distribution of square-free numbers, {\it Canadian J. Math.} {\bf 25} (1973), 1216--1223.

\bibitem{Hu1} M.N. Huxley, {\it Moments of differences between square-free numbers}, Sieve methods, exponential sums, and their applications in number theory (Cardiff, 1995), London Math. Soc. Lecture Note Ser., 237, Cambridge Univ. Press, Cambridge (1997), 235--253.

\bibitem{Hu2} M.N. Huxley, The rational points close to a curve II, {\it Acta Arith.} {\bf 93} (2000), no. 3, 201--219.

\bibitem{Hu4} M.N. Huxley, {\it The rational points close to a curve IV}, Proceedings of the Session in Analytic Number Theory and Diophantine Equations, 36 pp., Bonner Math. Schriften, 360, Univ. Bonn, Bonn, 2003.

\bibitem{HS1} M.N. Huxley and P. Sargos, Points entiers au voisinage d'une courbe plane de classe $C^n$, {\it Acta Arith.} {\bf 69} (1995), 359--366.

\bibitem{HS2} M.N. Huxley and P. Sargos, Points entiers au voisinage d'une courbe plane de classe $C^n$ II, {\it Functiones et Approximatio} {\bf 35} (2006), 91--115. 

\bibitem{M} L. Mirsky, Arithmetical pattern problems related to divisibility by $r$th powers, {\it Proc. London Math. Soc.} {\bf 50} (1949), 497--508.

\end{thebibliography}
\end{document}